\newcommand{\Z}{\mathbb{Z}}
\newtheorem{theorem}{Theorem}
\newtheorem*{theorem*}{Theorem}
\newtheorem{lemma}[theorem]{Lemma}
\newtheorem{observation}[theorem]{Observation}
\newtheorem{remark}[theorem]{Remark}
\newtheorem{question}[theorem]{Question}
\newtheorem{conjecture}[theorem]{Conjecture}
\title{The number of tiles of $\Z^d$}
\date{\today}
\author{Itai Benjamini, Gady Kozma, Elad Tzalik }
\begin{document}

\maketitle
\begin{abstract}
    \noindent It is proved that the number of subsets of $[n]^d$ that tile $\Z^d$ is $\left(3^{\frac{1}{3}}\right)^{n^d \pm o(n^d)}$.
\end{abstract}

\section{Introduction}

A set \(S\subseteq \Z^d\) tiles \(\Z^d\) by translations if there is a set \(T\subseteq\Z^d\) such that
\[
    \Z^d=\dot\bigcup_{z\in T}(S+z).
\]
Tiling $\Z^d$
with translations is a natural problem that has been widely studied \cite{COVEN,Wang, Newman,LabaLondner,LabaLondner2,GT}. Even in dimension $1$ the exact structure of tiles remains unclear in general, with a long-standing conjecture of Coven and Meyerowitz \cite{COVEN} aiming to shed light on this problem.

Rather than studying the \emph{exact} structure of tiles, which is open even in dimension \(1\), our aim is to study their \emph{typical} structure by counting them. Concretely, we count how many subsets of $[n]^d = \{(x_1,\ldots,x_d) \mid x_i \in \{1,\ldots,n\} \}$ can tile $\Z^d$, which implies bounds on the search space we are working with. 
We remark that our work differs from the recent work of Stern \cite{Stern}, which studied the problem of counting for a given $S \subseteq \Z^d$, how many different ways there are to tile $S$ ($S$ may not tile $\Z^d$).

Let $t_{n,d}$ be the number of subsets of $[n]^d$ that tile $\Z^d$. We use the notation $v_n:=n^d$ for the size of $[n]^d$. We prove:

\begin{theorem}\label{thm:number-of-tiles}
For every fixed \(d\ge 1\),
\[
    (3^{1/3})^{v_n-o(v_n)}
    \le t_{n,d}\le
    (3^{1/3})^{v_n+o(v_n)} .
\]
\end{theorem}

\section{Proof of \Cref{thm:number-of-tiles}}

\subsection{Lower Bound}
Assume first that \(n=3t\). In dimension \(1\), choose one point from each residue class modulo \(t\) inside \([3t]\). There are \(3^t=(3^{1/3})^n\) choices, and each such set tiles \(\Z\) by translations by \(t\).

For general \(d\), apply this construction independently on each line parallel to \(e_1\). Equivalently, for every
\[
    (i,x_2,\ldots,x_d)\in [t]\times[3t]^{d-1},
\]
choose exactly one of the three points
\[
    (i,x_2,\ldots,x_d),\quad
    (i+t,x_2,\ldots,x_d),\quad
    (i+2t,x_2,\ldots,x_d).
\]
The resulting set is a fundamental domain for the lattice generated by
\[
    t e_1,\quad 3t e_2,\ldots,3t e_d,
\]
and hence tiles \(\Z^d\). This gives
\[
    t_{3t,d}\ge 3^{t(3t)^{d-1}}
    =
    (3^{1/3})^{(3t)^d}.
\]
For general \(n\ge 3\), let \(n'=3\lfloor n/3\rfloor\). Applying the construction inside
\([n']^d\subseteq[n]^d\) gives
\[
    t_{n,d}\ge (3^{1/3})^{(n')^d}
    =
    (3^{1/3})^{n^d-o(n^d)}.
\]

\subsection{Upper bound}
We now prove the upper bound. Since \(H(0.1)<\frac13\log_2 3\),
\[
    \sum_{r\le 0.1v_n}\binom{v_n}{r}
    \le 2^{H(0.1)v_n}
    =
    o\!\left((3^{1/3})^{v_n}\right).
\]
Thus it remains to count tiles \(S\subseteq[n]^d\) with
\[
    |S|\ge \alpha v_n,\qquad \alpha=0.1.
\]

Let \(k=k(n)\) be a positive integer with $k\to\infty$; at the end we take $k=\lfloor n^{1/(2d)}\rfloor$. Consider shifts $z_1,\ldots,z_\ell\in\Z^d$. We say that $S$ tiles $[kn]^d$ with $z_1,\ldots,z_\ell$ if the translates $S+z_a$ are pairwise disjoint and
\[
    [kn]^d\subseteq \dot\bigcup_{a=1}^{\ell}(S+z_a).
\]
Call the translates efficient if \(([n]^d+z_a)\cap[kn]^d\ne\emptyset\) for every \(a\). Crucially, every set $S$ that tiles $\mathbb{Z}^d$ tiles $[kn]^d$ with a choice of efficient translates. By definition, every efficient shift lies in \(\{-n,\ldots,kn\}^d\). Also, \(\ell\ge k^d\), because \(|S|\le v_n\) and the translates cover \([kn]^d\), while
\[
    \ell\le \frac{(k+2)^d}{\alpha}=O_\alpha(k^d),
\]
because the efficient translates are disjoint, each has size at least \(\alpha v_n\), and all lie in a slightly enlarged cube of volume \(O(k^d v_n)\). Therefore the number of efficient sequences is at most
\[
    (O(k^d v_n))^{O_\alpha(k^d)}.
\]
We fix $z_1,\ldots,z_{\ell}$ and count the number of sets $S$, that tile a set which contains $[kn]^d$ with the fixed translations $z_1,\ldots,z_{\ell}$. Define
\[
    A_y=\{\,y-z_a:a\in[\ell]\,\}\cap[n]^d,
    \qquad y\in[kn]^d.
\]
\noindent Since the translates \(S+z_a\) are pairwise disjoint and cover \([kn]^d\), for every \(y\in[kn]^d\) the set
\[
    F_y=S\cap A_y
\]
has size exactly one.

Let \(\mathcal S\) be chosen uniformly from the family of all sets \(S\subseteq[n]^d\) with \(|S|\ge\alpha v_n\) such that the fixed translates \(S+z_1,\ldots,S+z_\ell\) are pairwise disjoint and cover \([kn]^d\). We bound the entropy of \(\mathcal S\) and show \(H(\mathcal S) \leq \log_2 \left( 3^{\frac{1}{3}}+ o_{k}(1) \right) \cdot v_n \), which implies that for every fixed \(z_1,\ldots,z_\ell\) we have at most \(\left(3^{\frac{1}{3}}+o_{k}(1)\right)^{v_n}\) tiles. Union bounding over the efficient sequences counted above will conclude the proof \footnote{For $k=\lfloor n^{1/(2d)}\rfloor$, the term coming from this union bound is $(3^{\frac{1}{3}})^{o(v_n)}$.}.

Write
\[
    \mathcal S=(X_x)_{x\in[n]^d},
\]
where \(X_x=\mathbbm{1}_{x\in\mathcal S}\). For each \(x\in[n]^d\), the set of \(y\in[kn]^d\) such that \(x\in A_y\) is
\[
    \{x+z_a:a\in[\ell]\}\cap[kn]^d.
\]
\noindent
For \(x\in[n]^d\), set
\[
    J(x)=|\{a\in[\ell]:x+z_a\notin[kn]^d\}|.
\]
If \(x+z_a\notin[kn]^d\), then, by efficiency, \([n]^d+z_a\) crosses the boundary of \([kn]^d\), and hence \(S+z_a\) lies in the \(n\)-neighbourhood \(B\) of the boundary of \([kn]^d\). Since the corresponding translates are disjoint,

\begin{align}\label{eq - vol packing}
    \alpha v_n J(x)\le |B|
       \le ((k+2)^d-(k-2)^d)v_n.
\end{align}
Thus, for all sufficiently large \(k\),
\[
    J(x)\le J:=\frac{5dk^{d-1}}{\alpha}.
\]
Therefore the family \((A_y)_{y\in[kn]^d}\) covers each coordinate \(x\in[n]^d\) at least \(\ell-J\) times.

To bound \(H(\mathcal S)\), we use Shearer's inequality, which gives an upper bound on the entropy of \(\mathcal S\) in terms of entropies of its projections. Concretely:

\[ H(\mathcal S)\leq \frac{1}{\ell-J} \sum_{y\in[kn]^d} H(\mathcal S|_{A_y}) .\]

\noindent Here comes the crux of the proof - since $\mathcal{S}$ tiles $[kn]^d$ using the translations, for every \(y\in[kn]^d\), the random vector \(\mathcal S|_{A_y}\) has exactly one coordinate equal to \(1\). If no coordinate of \(\mathcal S|_{A_y}\) is $1$ then $y$ is not covered by the translates of $\mathcal{S}$. If two coordinates of \(\mathcal S|_{A_y}\) are $1$, then $y$ is covered by two distinct elements of $\mathcal{S}$, violating the tiling assumption. Consequently, the random variable \(\mathcal S|_{A_y}\) can take at most \(|A_y|\) possible values (which is much better than the naive $2^{|A_y|}$ bound, and is the reason for the improved size bound for tiles), hence
\[
    H(\mathcal S|_{A_y})\le \log_2 |A_y|.
\]
Thus, applying Shearer's inequality we get:

\begin{align}\label{eq : entropy A set bound}
    H(\mathcal S)
    \le \frac{1}{\ell-J}\sum_{y\in[kn]^d}\log_2|A_y|
    \le \frac{1}{\ell-J}\log_2\prod_{y\in[kn]^d}|A_y|.
\end{align}

\noindent We will need the following observation (which is a discrete form of Jensen's inequality):

\begin{observation}\label{obs : discrete jensen}
    Let $x_1,\ldots ,x_m$ be non-negative integers with $\sum_{i=1}^m x_i = t$, with $t=am+b$ and \(0\le b<m\). Then
    \[
        \prod_{i=1}^m x_i\le (a+1)^b a^{m-b}.
    \]
\end{observation}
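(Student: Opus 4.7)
The plan is to prove this as an optimization statement: among all non-negative integer tuples $(x_1,\ldots,x_n)$ with $\sum x_i = t$, the product $\prod x_i$ is maximized exactly when the entries are as equal as possible, namely $b$ of them equal $a+1$ and the remaining $n-b$ equal $a$. Substituting gives the claimed upper bound $(a+1)^b a^{n-b}$.

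First I would dispose of the degenerate case $a=0$. Then $t=b<n$, so by pigeonhole at least one $x_i$ equals $0$, hence $\prod x_i = 0$; the bound reads $1^b \cdot 0^{n-b} = 0$ since $n-b > 0$, so equality holds trivially.

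For $a \geq 1$ I would run a standard exchange argument. Let $(x_1,\ldots,x_n)$ be a maximizer of $\prod x_i$ under the given constraints. If any $x_i = 0$ the product is $0$, which is strictly beaten by the tuple with $b$ entries equal to $a+1$ and $n-b$ equal to $a$ (whose product is at least $a^n \geq 1$), so we may assume all $x_i \geq 1$. Now suppose toward contradiction that there exist indices $i,j$ with $x_i - x_j \geq 2$. Replacing $(x_i,x_j)$ by $(x_i-1, x_j+1)$ preserves both the non-negativity and the sum, and multiplies the product by
\[
\frac{(x_i-1)(x_j+1)}{x_i x_j} \;=\; 1 + \frac{x_i - x_j - 1}{x_i x_j} \;>\; 1,
\]
contradicting maximality. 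Therefore at the maximum $\max_i x_i - \min_i x_i \leq 1$, so the entries take at most two consecutive integer values $a'$ and $a'+1$; writing $b'$ for the number of entries equal to $a'+1$, the sum constraint reads $a'n + b' = an+b$ with $0 \leq b, b' < n$, which forces $(a',b')=(a,b)$, and the maximum value is $(a+1)^b a^{n-b}$.

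There is no substantive obstacle here: the only subtlety is ensuring the exchange is \emph{strict}, which is exactly why the threshold $x_i - x_j \geq 2$ (rather than $\geq 1$) is the right one, and why we need $x_j \geq 1$ so that the factor $(x_i-1)(x_j+1)/(x_i x_j)$ is actually well-defined and greater than $1$.
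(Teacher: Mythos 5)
Your proof is correct and follows essentially the same route as the paper's: a smoothing/exchange argument showing that any maximizer must have all entries within $1$ of each other, which pins down the configuration of $b$ copies of $a+1$ and $n-b$ copies of $a$. The only cosmetic difference is that you transfer one unit at a time via $(x_i,x_j)\mapsto(x_i-1,x_j+1)$ while the paper balances a pair in one step via $k(k+s)<(k+\lfloor s/2\rfloor)(k+\lceil s/2\rceil)$; your write-up is more careful about the degenerate case $a=0$ and the uniqueness of the balanced configuration.
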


\begin{proof}
    Since for $k\geq 0$ and $s>1$ we have $k(k+s)<(k+\lfloor \frac{s}{2} \rfloor) (k+\lceil \frac{s}{2} \rceil)$ any maximum contains elements $x_i$ that differ by at most $1$ from one another. The only choice (up to permutation) of $x_1,\ldots ,x_m$ with this property achieves the desired upper bound.
\end{proof}

\noindent Set \(x_y=|A_y|\), \(y\in[kn]^d\). Then
\[
    \sum_{y\in[kn]^d}|A_y|\le \ell v_n,
\]
because each \(x\in[n]^d\) contributes to at most one \(A_y\) for each shift \(z_a\). The product is maximized, under this upper bound on the total sum, by using the full allowed sum. Applying \Cref{obs : discrete jensen} with \(m=(kn)^d\) and total sum at most \(\ell v_n\), and using $\ell n^d = \lfloor \frac{\ell}{k^d} \rfloor \cdot (kn)^d + \left\{ \frac{\ell}{k^d} \right\} \cdot (kn)^d$ where $\{\}$ denotes the fractional part of a real number, we get:

\begin{align}
    H(\mathcal S) &\leq \frac{1}{\ell-J} \log_2 \left( \left(\Bigg\lfloor \frac{\ell}{k^d} \Bigg\rfloor+1 \right)^{\left\{ \frac{\ell}{k^d} \right\} \cdot k^d v_n} \Bigg\lfloor \frac{\ell}{k^d} \Bigg\rfloor^{(1-\left\{ \frac{\ell}{k^d} \right\})k^d v_n} \right) \\
    &= \frac{1}{\ell-J} \log_2 \left( \left(\Bigg\lfloor \frac{\ell}{k^d} \Bigg\rfloor+1 \right)^{\left\{ \frac{\ell}{k^d} \right\} \cdot k^d} \Bigg\lfloor \frac{\ell}{k^d} \Bigg\rfloor^{(1-\left\{ \frac{\ell}{k^d} \right\})k^d} \right)\cdot v_n \\
    &= \frac{\ell}{\ell-J} \log_2 \left( \left(\Bigg\lfloor \frac{\ell}{k^d} \Bigg\rfloor+1 \right)^{\left\{ \frac{\ell}{k^d} \right\} \cdot \frac{k^d}{\ell}} \Bigg\lfloor \frac{\ell}{k^d} \Bigg\rfloor^{(1-\left\{ \frac{\ell}{k^d} \right\}) \cdot \frac{k^d}{\ell}} \right)\cdot v_n \label{eq : fractional quantity}
\end{align}

To bound the term inside the $\log$ by $3^{\frac{1}{3}}$, we use the following lemma:

\begin{lemma}
    Let $N$ be a positive integer and $r \in [0,1]$. Then
    \[
        f(r)=(N+1)^{\frac{r}{N+r}}N^{\frac{1-r}{N+r}}
    \]
    attains its maximum on \([0,1]\) at an endpoint.
\end{lemma}

\begin{proof}
    Let
    \[
        g(r)=\log f(r)
        =\frac{r\log(N+1)+(1-r)\log N}{N+r}.
    \]
    Then
    \[
        g'(r)=\frac{N\log(1+1/N)-\log N}{(N+r)^2},
    \]
    whose sign is independent of \(r\). Hence \(g\), and therefore \(f\), is monotone on \([0,1]\).
\end{proof}

    By the lemma, the term inside the logarithm in \Cref{eq : fractional quantity} is bounded by
    \(M^{1/M}\) for some positive integer \(M\). Notice that since $x^{\frac{1}{x}}$ increases at $[1,e)$ and decreases at $(e,\infty)$, together with the fact that $2^{\frac{1}{2}}< 3^{\frac{1}{3}}$ we know that:
    
    \[
        \max_{M\in\mathbb N} M^{1/M}=3^{1/3}.
    \]

    \;
    
    \noindent therefore $H(\mathcal S) \leq \frac{\ell}{\ell-J} \log_2(3^{\frac{1}{3}}) \cdot v_n$, hence the number of possible sets \(S\) for these fixed shifts and $|S| \geq \alpha v_n$ is at most:
    
    \[\left(3^{\frac{1}{3}} \right)^{\frac{\ell}{\ell-5dk^{d-1}/\alpha}v_n}\]

\;

\noindent In conclusion, setting $k=\lfloor n^{\frac{1}{2d}}\rfloor$ concludes the proof of the upper bound.

Since \(\ell\ge k^d\) and \(J\le 5dk^{d-1}/\alpha\),
\[
    \frac{\ell}{\ell-J}
    \le
    \frac{k^d}{k^d-5dk^{d-1}/\alpha}
    =
    1+O_\alpha(1/k).
\]

\[
t_{n,d}
\le
\sum_{r\le \alpha v_n}\binom{v_n}{r}
+
(O(k^d v_n))^{O_\alpha(k^d)}
\cdot
\left(3^{1/3}\right)^{
    \frac{k^d}{k^d-5dk^{d-1}/\alpha}v_n
}
\le
(3^{1/3})^{v_n+o(v_n)}.
\]
With \(k=\lfloor n^{1/(2d)}\rfloor\), the sequence-counting factor is \((3^{1/3})^{o(v_n)}\), and the exponent factor is \(1+o(1)\).

\begin{remark}
    It also follows from the proof that all translation sequences for which $\frac{\ell}{k^d} \notin (3-\varepsilon, 3+\varepsilon)$ yield a stronger bound on the entropy. This implies that as $n$ goes to $\infty$ the size of a uniform random tile divided by $n^d$ converges to $\frac{1}{3}$ a.a.s. \end{remark}
\;

\section{Concluding remarks}

The upper-bound argument uses only two coarse facts about boxes in \(\Z^d\): efficient translates lie in a slightly enlarged region, and the \(n\)-neighbourhood of the boundary of \([kn]^d\) has \(o(k^d n^d)\) points when \(k\to\infty\). Thus the same proof applies, with minor changes, to other Følner-type regions in \(\Z^d\), such as lattice balls for a fixed norm. In spaces of exponential growth this boundary layer may have volume comparable to the whole ball, and the constant \(3^{1/3}\) does not seem to be the right one.

We conclude with several conjectures and questions:

\begin{conjecture}
    Fix \(r\ge 1\). Let \(x\in[n]^d\) be chosen uniformly, and let \(\mathcal S_n\) be a uniformly random tile contained in \([n]^d\), independently of \(x\). Does the law of
    \[
        \bigl(\mathbbm{1}_{x+u\in\mathcal S_n}\bigr)_{u\in B_r(0)}
    \]
    converge, as \(n\to\infty\), to the product measure of Bernoulli-\(1/3\) random variables?
\end{conjecture}

\begin{conjecture}
    For each $d$ there is a constant $C(d)$ that depends only on $d$ such that: $t_{n,d} = O(n^{C(d)} \cdot (3^{\frac{1}{3}})^{v_n})$. Can one take $C(d)=d-1$?
\end{conjecture}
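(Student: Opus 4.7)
The plan is to upgrade the two sub-exponential losses in the proof of \Cref{thm : number of 1 tiles}: the union bound $(k^d v_n)^{O(k^d)}$ over translation sequences, and the multiplicative overhead $\ell/(\ell-J)$ in Shearer's inequality produced by the $n$-neighbourhood of $\partial [kn]^d$. A polynomial bound requires both to collapse to $\mathrm{poly}(n)$.

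The natural route is to pass to a periodic torus. Suppose $S$ tiles $\Z^d$ with some translation set $T$ invariant under a sub-lattice $L\subseteq\Z^d$ of co-volume $k|S|$. Then $S$ tiles the torus $\Z^d/L$ by the finite set $T/L$ of size $k$, every $i\in[n]^d$ is covered exactly $k$ times by the corresponding $A_j$'s, and Shearer's inequality yields $H(\mathcal{S})\leq \tfrac{1}{k}\log_2\prod_j|A_j|$ with no boundary overhead; the discrete Jensen step \Cref{obs : discrete jensen} still produces the leading $\log_2(3^{1/3})\cdot v_n$. Enumerating $L$ via its Hermite normal form produces a $\mathrm{poly}(n)$ family of lattices of co-volume at most $Cv_n$, giving $t_{n,d}\leq n^{O(d^2)}\cdot (3^{1/3})^{v_n}$ and settling the existential part of the conjecture.

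The main obstacle is that recent work of Greenfeld--Tao exhibits $\Z^d$-tiles, for large $d$, admitting no periodic tiling whatsoever, so the reduction above is not automatically available. Salvaging it requires a quantitative almost-periodicity statement: every tile $S\subseteq [n]^d$ should admit \emph{at least one} tiling whose translation set is periodic under a lattice of co-volume polynomial in $v_n$, even when no tiling of $\Z^d$ is globally periodic. A weaker substitute would be to show that the restriction of any tiling to a constant-factor-larger cube $[Kn]^d$ already pins down $S$ up to $(3^{1/3})^{v_n}\cdot\mathrm{poly}(n)$ choices, by iterating Shearer on a nested family of cubes and tracking how much new information each scale contributes to the entropy.

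Achieving $C(d)=d-1$ is a further step. The lower-bound construction produces its $n^{d-1}$ factor purely from the choice of tiling direction $u\in[3t]^{d-1}$, so matching it requires showing that fixing the period lattice $L$ (or the analogous data in the non-periodic case) collapses the count to $(3^{1/3})^{v_n}$ up to a universal constant; separating the combinatorial freedom of colouring a fundamental domain from the algebraic freedom of selecting $L$ is where I expect the hardest work.
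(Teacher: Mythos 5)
This statement is one of the paper's open \emph{conjectures}; the paper offers no proof of it, so there is nothing to compare your argument against, and the only question is whether your proposal actually closes the problem. It does not, and you essentially say so yourself. The entire argument funnels through the reduction to a tiling that is periodic under a lattice $L$ of small co-volume, and that reduction is precisely the hard open content. It is not merely ``not automatically available'': by Greenfeld--Tao it is \emph{false} in sufficiently high dimension that every tile admits a periodic tiling at all, and even where periodicity is known it is not known quantitatively in the form you need. In $d=1$ Newman's argument gives a period bounded only by $2^{\mathrm{diam}(S)}$, and in $d=2$ Bhattacharya's periodicity theorem comes with no polynomial co-volume bound; so the assertion that one may restrict to a $\mathrm{poly}(n)$ family of lattices of co-volume $O(v_n)$ is unsupported in every dimension. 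The proposed ``quantitative almost-periodicity statement'' is a named gap, not a lemma, and the ``weaker substitute'' (iterating Shearer over nested cubes so that each scale contributes few bits) is stated without any mechanism for why the per-scale contribution would be $O(\log n)$ rather than a constant fraction of $v_n$.

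There is also a quantitative gap inside the periodic case itself, which you gloss over. After fixing $L$, the union bound must also run over the translation set $T/L$, i.e.\ over the $k=\mathrm{covol}(L)/|S|$ coset representatives inside $\Z^d/L$; this costs on the order of $\binom{\mathrm{covol}(L)}{k}$, which is polynomial in $n$ only when $k=O(1)$, i.e.\ when $\mathrm{covol}(L)=O(v_n)$ --- not merely $n^{O(d^2)}$ as written. So even the existential part $C(d)<\infty$ would require co-volume \emph{linear} in $v_n$, a much stronger quantitative periodic tiling statement than anything currently known. The torus version of the Shearer/Jensen computation is correct and is a sensible way to see why the boundary loss $\ell/(\ell-J)$ should disappear, but as it stands the proposal is a research plan whose key ingredient is at least as hard as the conjecture itself.
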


\begin{question}
    Let \(\mathcal S\) be a tile contained in $[n]^d$ chosen uniformly at random. What is the size of the largest connected component in the subgraph induced by \(\mathcal S\)? The lower-bound family resembles site percolation on \([n]^{d-1}\times[n/3]\) with \(p=1/3\). Since site percolation on \(\Z^d\) with \(p=1/3\) is supercritical for \(d\ge 3\) and subcritical for \(d\le 2\)\footnote{See \url{https://en.wikipedia.org/wiki/Percolation_threshold}}, this suggests asking whether a similar phase transition occurs for the largest connected component of a random tile.
\end{question}

\begin{question}
    Let $M\subseteq \Z$ be any subset of size $n$. Are there at most $\left( 3^{1/3} + o(1) \right)^n$ tiles inside $M$? Is the number of tiles contained in \(M\) maximized when \(M\) consists of \(n\) consecutive integers?
\end{question}

\begin{question}
    How many tiles are contained in the set \(\{1,2,4,\ldots,2^n\}\)? Are the only such tiles the sets of size \(1\) and \(2\)? \footnote{Notice that if one considers \(0,1,2,4,\ldots,2^n\), then for values of \(n\) admitting a prime \(p\approx n/3\) for which \(2\) generates \(\mathbb F_p^\times\), one can construct many tiles by picking one from each mod \(p\) class to be in the tile.}
\end{question}

\begin{question}
    Call $S \subseteq \Z^d$ \emph{perfect} if all non-empty subsets of $S$ tile $\Z^d$. E.g. it follows immediately by the work of \cite{Newman} that the largest size of a perfect subset of \(\Z\) is \(3\). What is the size of the largest perfect subset in $\Z^d$ for $d \geq 2$?
\end{question}

\paragraph{Acknowledgment.} I.B. and G.K. thank the Israel Science Foundation for its support. We thank Tom Meyerovitch for his comments on an earlier version of this work. We thank the referee for suggestions that improved the paper.

\bibliographystyle{plainurl}
\bibliography{ref}

\end{document}